\newtheorem{thm}{Theorem}[section]
\theoremstyle{definition}
\newtheorem{defin}[thm]{Definition}
\newtheorem{rem}[thm]{Remark}
\newtheorem{exa}[thm]{Example}
\numberwithin{equation}{section}
\def\N{\mathbb N}
\def\Z{\mathbb Z}
\def\Q{\mathbb Q}
\def\P{\mathbb P}
\begin{document}

\baselineskip=17pt


\title{On $\theta$-congruent numbers, rational squares in arithmetic progressions, 
concordant forms and elliptic curves}

\author{Erich Selder\\
Fachhochschule Frankfurt\\
D - 60318 Frankfurt am Main, Germany\\
E-mail: e$\underbar{\phantom{e}}$selder@fb2.fh-frankfurt.de
\and 
Karlheinz Spindler\\
Hochschule RheinMain\\
D - 65197 Wiesbaden, Germany\\
E-mail: karlheinz.spindler@hs-rm.de}

\date{}

\maketitle


\renewcommand{\thefootnote}{}

\footnote{2010 \emph{Mathematics Subject Classification}: Primary 11D25, 11G05; Secondary 14H52.}

\footnote{\emph{Key words and phrases}: Elliptic curves, concordant forms, $\theta$-congruent 
numbers.}

\renewcommand{\thefootnote}{\arabic{footnote}}
\setcounter{footnote}{0}


\begin{abstract}
The correspondence between right triangles with rational sides, triplets of rational squares in arithmetic 
succession and integral solutions of certain quadratic forms is well known. We show how this 
correspondence can be extended to the generalized notions of rational $\theta$-triangles, rational 
squares occurring in arithmetic progressions and concordant forms. In our approach we establish 
one-to-one mappings to rational points on certain elliptic curves and examine in detail the role of solutions 
of the $\theta$-congruent number problem and the concordant form problem associated with 
nontrivial torsion points on the corresponding elliptic curves. This approach allows us to combine  
and extend some disjoint results obtained by a number of authors, to clarify some statements 
in the literature and to answer some hitherto open questions. 
\end{abstract}

\section{Introduction}

The following definition dates back to Euler ([E]; see also [O2]).

\begin{defin}
Two quadratic forms $X^2+mY^2$ and $X^2+nY^2$ (where $m,n\in{\Z}\setminus\{ 0\}$ with 
$m\not= n$) are called concordant if the system 
$$X^2+mY^2=Z^2, \quad X^2+nY^2=W^2$$ 
admits a nontrivial solution $(X,Y,Z,W)\in{\Z}^4$, where nontriviality means that 
$Y\not= 0$.  This is equivalent to saying that there are solutions $(X,Y,Z,W)$ $\in 
{\P}_3({\Q})$ other than $(1,0,\pm 1,\pm 1)$. Thus $(X,Y,Z,W)$ is a trivial solution if 
and only if it is a solution for any pair $(m,n)$.
\end{defin}

It is easily verified that every system of concordant forms is equivalent to one in which the coefficients 
$m$ and $n$ have different signs; hence we may always assume that $m<0$ and $n>0$. After factoring out the 
greatest common divisor of the coefficients, this leads us to consider quadratic forms 
$X^2-pkY^2$ and $X^2+qkY^2$ where $k,p,q\in{\N}$ with $p$ and $q$ coprime. Concordant 
forms in this form tie up nicely with rational squares occurring in arithmetic progressions. In fact, 
if $\alpha^2<\beta^2<\gamma^2$ are squares of rational numbers which occur in an arithmetic 
progression of (maximally chosen) step size $k$, then there are coprime numbers $p,q\in{\N}$ such 
that $\alpha^2=\beta^2-pk$ and $\gamma^2=\beta^2+qk$, which means exactly that the forms 
$X^2-pkY^2$ and $X^2+qkY^2$ are concordant.  To have a precise terminology available, let us 
give a formal definition. \bigskip 

\begin{defin}
A triplet $(p,q,k)\in{\N}\times{\N}\times{\N}$ where $p,q$ are coprime is called a solution of the 
concordant form problem if and only if the quadratic forms $X^2-pkY^2$ and $X^2+qkY^2$ are 
concordant; i.e., if and only if there is an arithmetic progression of (maximal) step size $k$ 
containing three rational squares, where the lowest and the highest are separated from the 
intermediate one by $p$ times resp. $q$ times the step size. 
\end{defin}

Obviously, a triplet $(p,q,k)$ is a solution of the concordant form problem if and only if 
$(p,q,a^2k)$ is for any $a\in{\N}$ (since the factor $a$ can be subsumed into $Y$); hence it is 
sufficient to study solutions $(p,q,k)$ where $k$ is squarefree. We note that arithmetic 
progressions of squares have been studied not just over the rationals, but over number 
fields (cf. [Co2], [GS], [GX], [X]). While in these approaches the goal was to find (maximal) 
uninterrupted arithmetic progressions of squares in the given base field, we focus on rational 
squares which occur in arithmetic progressions, but not necessarily in immediate succession. 
We now relate the concordant form problem to a different problem which is cast in geometric 
rather than arithmetic language. The concept of congruent numbers (see [Ko], [T]) has been 
extended to that of a $t$-congruent number (cf. [TY]) and even more generally to that of a 
$\theta$-congruent number (cf. [F1], [Ka], [Y1], [Y2], [Y3]); for an overview see [TY]. 
Even though the concept of a $t$-congruent number is more natural from a geometric point 
of view (arising from the search for triangles with rational sides and rational area), the more 
general concept of a $\theta$-congruent number is more relevant for the purposes of this paper. 

\begin{defin}
Given an angle $\theta\in (0,\pi)$ whose cosine is a rational number, a number $k\in{\N}$ 
is called $\theta$-congruent if there is a triangle with rational sides which has $\theta$ as 
an angle and $k\sqrt{s^2-r^2}$ as its area, where $r\in{\Z}$ and $s\in{\N}$ are the unique 
coprime numbers such that $\cos (\theta ) = r/s$. Somewhat more precisely we call a 
triplet $(r,s,k)\in{\Z}\times{\N}\times{\N}$ a solution of the generalized congruent number 
problem if $k$ is $\theta$-congruent where $\cos\theta =r/s$ in lowest terms. 
\end{defin}

Scaling the sides of a triangle by a factor $a$ changes the area by the factor $a^2$; hence 
a natural number is $\theta$-congruent for some angle $\theta$ if and only if its squarefree 
part is. In other words, $(r,s,k)$ is a solution of the generalized congruent number problem if 
and only if $(r,s,a^2k)$ is for any number $a\in{\N}$. Clearly, $(\pi /2)$-congruence is just ordinary 
congruence of numbers; the only other angles $\theta$ for which $\theta$-congruence has 
been studied somewhat systematically are $\theta=\pi /3$ and $\theta =2\pi /3$; see [Y1], 
[Y2], [Y3] and [JSDP]. \bigskip 

The two problems are closely related, as is shown by the following theorem (whose elementary 
proof we omit). \bigskip 

\begin{thm} 
Let $\frak{N}$ be the set of solutions of the generalized congruent number problem, and let 
$\frak{F}$ be the set of solutions of the concordant form problem. Then mutually inverse bijections 
$f:\frak{N}\rightarrow\frak{F}$ and $g:\frak{F}\rightarrow\frak{N}$ are given by 
$$f(r,s,k) := \begin{cases} 
     (s-r,s+r,k) &\hbox{if $r\not\equiv s$ mod $2$},\\ 
     \bigl( (s-r)/2, (s+r)/2, 2k\bigr) &\hbox{if $r,s$ are both odd} \end{cases} $$ 
and 
$$g(p,q,k) := \begin{cases} \bigl( (q+p)/2, (q-p)/2, k\bigr) &\hbox{if $p,q$ are both odd},\\ 
  \bigl( q+p,q-p,k/2\bigr) &\hbox{if $p\not\equiv q$ mod $2$.} \end{cases}$$ 
\end{thm}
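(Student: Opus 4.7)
The plan is to build both maps directly from the geometry, then verify they are mutual inverses; the core idea is just the law of cosines together with the identity $(a\pm b)^2 = a^2+b^2 \pm 2ab$, with the rest being parity bookkeeping.

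First, given $(r,s,k)\in\mathfrak{N}$, I would pick a rational triangle with sides $a,b,c$, angle $\theta$ between $a$ and $b$ satisfying $\cos\theta = r/s$, and area $k\sqrt{s^2-r^2}$. Using $\sin\theta = \sqrt{s^2-r^2}/s$ in the area formula $\tfrac{1}{2}ab\sin\theta$ gives $ab = 2ks$, and combining the law of cosines $c^2 = a^2+b^2 - 2ab(r/s)$ with $(a\pm b)^2 = a^2+b^2 \pm 2ab$ yields
\begin{equation*}
c^2 - (a-b)^2 = 4k(s-r), \qquad (a+b)^2 - c^2 = 4k(s+r).
\end{equation*}
Thus $(a-b)^2,\, c^2,\, (a+b)^2$ are three rational squares whose successive gaps are in ratio $(s-r):(s+r)$, with step $4k$. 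If $r\not\equiv s\pmod 2$ then $s\pm r$ are odd and coprime, so $(p,q)=(s-r,s+r)$ are already coprime and after absorbing the square $2^2$ into $Y$ (by the remark following Definition~1.2) the triple $(s-r,s+r,k)$ is a concordant-form solution. If $r,s$ are both odd then $\gcd(s-r,s+r)=2$, the coprime pair is $((s-r)/2,(s+r)/2)$, the effective step becomes $8k$, and absorbing $2^2$ leaves $2k$; this gives the two formulas for $f$.

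Conversely, from $(p,q,k)\in\mathfrak{F}$ and a nontrivial integer solution $(X,Y,Z,W)$ with $Y\neq 0$, I would set $\alpha=|Z/Y|$, $\beta=|X/Y|$, $\gamma=|W/Y|$, obtaining $\alpha^2,\beta^2,\gamma^2$ in arithmetic progression with differences $pk,\,qk$. The rational triangle with $a=(\gamma+\alpha)/2$, $b=(\gamma-\alpha)/2$, $c=\beta$ then has
\begin{equation*}
\cos\theta = \frac{a^2+b^2-c^2}{2ab} = \frac{q-p}{q+p}, \qquad \sin\theta = \frac{2\sqrt{pq}}{q+p},
\end{equation*}
and area $\tfrac14 k\sqrt{pq}$. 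Reducing $(q-p)/(q+p)$ to lowest terms splits on the parity of $p,q$: if both are odd one divides by $2$ to get $(r,s)=((q-p)/2,(q+p)/2)$ and checks $s^2-r^2 = pq$, while if $p\not\equiv q \pmod 2$ no reduction is needed and $(r,s)=(q-p,q+p)$ with again $s^2-r^2=pq$. Scaling the triangle by suitable integers (equivalently, using the remark that $(r,s,k')\leftrightarrow(r,s,a^2k')$) absorbs the factor $1/4$ resp.~the factor $1/2$ into $k$, recovering the two formulas for $g$.

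The last step is to check $f\circ g=\mathrm{id}$ and $g\circ f=\mathrm{id}$, which amounts to substituting the formulas into one another in each of the four combined parity cases and observing that the compositions are identities (e.g.\ in the mixed-parity case for $f$, the pair $(s-r,s+r)$ is both odd, so $g$ applies its ``both odd'' branch and returns $((s+r-(s-r))/2,(s+r+(s-r))/2,k)=(r,s,k)$). The only genuine obstacle is the bookkeeping with the powers of $2$ in the third coordinate; the geometric content beyond that is just the law of cosines.
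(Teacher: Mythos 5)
Your forward direction and the ``both odd'' branch of $g$ are sound: the law of cosines and $ab=2ks$ do give the gaps $4k(s-r)$ and $4k(s+r)$, and dividing the three squares by $2$ (absorbing $2^2$ into $Y$) is a legitimate square rescaling; likewise the formal composition checks in the four parity cases are routine. The genuine gap is in the mixed-parity branch of $g$. There your triangle has area $\tfrac{k}{4}\sqrt{pq}=\tfrac{k}{8}\sqrt{s^2-r^2}$, and rescaling a triangle by an integer (or rational) $t$ multiplies the area coefficient by $t^2$; so from $\tfrac{k}{8}$ you can only reach $\tfrac{t^2k}{8}$, and the asserted third coordinate $k/2$ requires $t=2$ \emph{together with the unproved claim that $k$ is even}. ``Absorbing the factor $1/2$ into $k$'' is not available: the remark after Definition 1.3 lets you change $k$ only by square factors. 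Nothing in your argument shows that a mixed-parity solution $(p,q,k)\in\frak{F}$ must have even $k$, and with Definitions 1.1--1.3 read literally this is in fact false: $(p,q,k)=(4,5,1)$ lies in $\frak{F}$ because $(X,Y,Z,W)=(2,1,0,3)$ solves $X^2-4Y^2=Z^2$, $X^2+5Y^2=W^2$ with $Y\neq 0$ (similarly $(1,8,1)$ via $(1,1,0,3)$, i.e.\ the $4$-torsion situation $(u,v)=(1,3)$ of Example 4.3), yet $g(4,5,1)$ would have third coordinate $1/2\notin\N$, and $(4,5,1)$ has no $f$-preimage at all, since the first branch would force $s-r=4$, $s+r=5$ and the second would force $2k=1$. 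So your proof of well-definedness of $g$ (equivalently, of surjectivity of $f$) cannot be completed as written; the mixed-parity case genuinely needs either a parity argument or a reformulation.

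The correct repair is to work modulo the identification $(p,q,k)\sim(p,q,a^2k)$ and $(r,s,k)\sim(r,s,a^2k)$ stated after Definitions 1.2 and 1.3 (for instance, the class of $(4,5,1)$ contains $(4,5,4)=f(1,9,2)$, realized by the triangle with sides $6,6,8$), or equivalently to fix normalized representatives and prove the bijection for those; as it stands, your argument silently assumes $2\mid k$ exactly at the point where the statement is delicate, so you should either add that hypothesis-level discussion or prove the needed parity claim for your chosen normalization. One further small remark: your output order for $g$, namely $(r,s)=\bigl((q-p)/2,(q+p)/2\bigr)$ resp.\ $(q-p,q+p)$, differs from the displayed formula in the statement, which lists $(q+p)/2$ first; your order is the one compatible with $|r|<s$ and with $g\circ f=\mathrm{id}$, so that discrepancy lies in the displayed formula rather than in your computation, and is not the issue above.
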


\section{Connections to elliptic curves}

In the introduction we exhibited a correspondence between $\theta$-congruent numbers, 
rational squares in arithmetic progressions and concordant forms written in a certain way. Now 
we establish a 1-1 correspondence between the intersection of quadrics given by concordant 
forms and elliptic curves in standard form. 

\begin{thm} Let $m\not= n$ be nonzero integers. We denote by $Q(m,n)$ the set of all 
$(X_0,X_1,X_2,X_3)\in\P_3(\Q)$ such that 
$$X_0^2+mX_1^2=X_2^2\quad\hbox{and}\quad X_0^2+nX_1^2=X_3^2.$$ 
Also, we denote by $E(m,n)$ the set of all $(T,X,Y)\in{\P}_2(\Q)$ such that 
$$Y^2T = X(X+mT)(X+nT)$$ 
which, in affine notation, is just the elliptic curve $y^2=x(x+m)(x+n)$. Then mutually 
inverse isomorphisms $\varphi :Q(m,n)\rightarrow E(m,n)$ and $\psi :E(m,n)\rightarrow 
Q(m,n)$ are given by 
$$\varphi :\left[\begin{matrix} X_0\\ X_1\\ X_2\\ X_3\end{matrix}\right]\mapsto 
  \left[\begin{matrix} nX_2-mX_3+(m\! -\! n)X_0\\ mn(X_3-X_2)\\ 
  mn(m\! -\! n)X_1 \end{matrix}\right]$$ 
and 
$$\psi :\left[\begin{matrix} T\\ X\\ Y\end{matrix}\right]\mapsto 
  \left[\begin{matrix}
       -(X\! +\! mT)\bigl( Y^2-m(X\! +\! nT)^2\bigr) \\ 
       2Y(X\! +\! nT)(X\! +\! mT) \\ 
       -(X\! +\! mT)\bigl( Y^2+m(X\! +\! nT)^2\bigr) \\ 
       -(X\! +\! nT)\bigl( Y^2+n(X\! +\! mT)^2\bigr) \end{matrix}\right]\, .$$ 
\end{thm}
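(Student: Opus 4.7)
The plan is to verify the four claims (i) $\varphi$ sends $Q(m,n)$ into $E(m,n)$, (ii) $\psi$ sends $E(m,n)$ into $Q(m,n)$, (iii) $\psi\circ\varphi=\mathrm{id}$ on $Q(m,n)$, and (iv) $\varphi\circ\psi=\mathrm{id}$ on $E(m,n)$ by direct algebraic calculation. Well-definedness of each map at generic points (the output not being the zero vector) will follow automatically from (iii) and (iv), where the compositions turn out to be nonzero scalar multiples of the input.

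For (i), setting $(T,X,Y)=\varphi(X_0,X_1,X_2,X_3)$, I would first establish the compact identities
$$X+mT = m(m-n)(X_0-X_3), \qquad X+nT = n(m-n)(X_0-X_2),$$
obtained straight from the definitions of $T$ and $X$. These give
$$X(X+mT)(X+nT) = m^2n^2(m-n)^2(X_3-X_2)(X_0-X_3)(X_0-X_2),$$
while $Y^2T = m^2n^2(m-n)^2 X_1^2\cdot[nX_2-mX_3+(m-n)X_0]$, so (i) reduces to the polynomial identity
$$X_1^2\bigl[nX_2-mX_3+(m-n)X_0\bigr] = (X_3-X_2)(X_2-X_0)(X_3-X_0),$$
which is verified by substituting $mX_1^2 = X_2^2-X_0^2$ and $nX_1^2 = X_3^2-X_0^2$ on the left and factoring out $(X_3-X_2)$.

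For (ii), write $A=X+mT$ and $B=X+nT$. The four coordinates of $\psi(T,X,Y)$ are $-A(Y^2-mB^2),\ 2YAB,\ -A(Y^2+mB^2),\ -B(Y^2+nA^2)$, and direct expansion gives
$$X_2-X_0 = -2mAB^2,\qquad X_2+X_0 = -2AY^2,$$
whence $X_2^2-X_0^2 = 4mA^2B^2Y^2 = mX_1^2$; this part is purely formal and does not use the curve equation. The other relation, $X_3^2-X_0^2 = nX_1^2$, does. Here one computes
$$X_3-X_0 = Y^2(m-n)T - AB\bigl[(m+n)X+2mnT\bigr]$$
and applies the curve relation $Y^2T = XAB$ to collapse the right side to $-2nA^2B$; the parallel computation yields $X_3+X_0 = -2Y^2B$. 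Multiplication then produces $X_3^2-X_0^2 = 4nA^2B^2Y^2 = nX_1^2$ as required.

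For (iii) and (iv), the compositions become manageable through the auxiliary identities
$$mX_1^2 - (X_2-X_0)^2 = 2X_0(X_2-X_0),\qquad mX_1^2 + (X_2-X_0)^2 = 2X_2(X_2-X_0),$$
which follow from $mX_1^2=(X_2-X_0)(X_2+X_0)$, together with their $(n,X_3)$-analogues. Substituting these into the coordinates of $\psi(\varphi(X_0,X_1,X_2,X_3))$, each coordinate turns out to equal $2m^2n^2(m-n)^3(X_2-X_0)(X_3-X_0)$ times the corresponding input coordinate, so projectively $\psi\circ\varphi=\mathrm{id}$; the verification of $\varphi\circ\psi=\mathrm{id}$ is formally parallel. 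The main obstacle I expect is the bookkeeping for the second quadric relation in (ii), since this is the sole place where the elliptic-curve equation itself (rather than the two quadric relations of $Q(m,n)$) is used in an essential way; all other steps are formal polynomial identities once the right factorisations are in hand.
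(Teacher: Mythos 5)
Your proposal is correct and is essentially the ``straightforward calculation'' that the paper explicitly omits: the key identities $X+mT=m(m-n)(X_0-X_3)$ and $X+nT=n(m-n)(X_0-X_2)$, the reduction of the second quadric relation via $Y^2T=XAB$, and the composite scalars $2m^2n^2(m-n)^3(X_2-X_0)(X_3-X_0)$ and $2mn(m-n)(X+mT)(X+nT)$ all check out. The only caveat is that these scalars vanish at finitely many exceptional points (the trivial solutions of $Q(m,n)$, resp.\ $(0,0,1)$, $(1,-m,0)$, $(1,-n,0)$ on $E(m,n)$), so your ``generic'' well-definedness argument covers exactly what the formulas can give and is consistent with the paper's remark that $\varphi$ and $\psi$ must be redefined at those points to obtain regular mutually inverse isomorphisms.
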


Note that $\varphi$ needs to be redefined at $(1,0,1,1)$ whereas $\psi$ needs to be 
redefined at $(1,-m,0)$, $(1,-n,0)$ and $(0,0,1)$ to obtain well-defined regular maps.
We omit the proof that $\varphi$ and $\psi$ are in fact well-defined and have the 
desired properties (which is obtained by straightforward calculations) and merely 
remark that this isomorphism is an instance of a general correspondence between elliptic 
curves and intersections of quadrics; see [Ca], p. 36, and [Co1], pp. 123-125.) We note 
that the isomorphism $\psi$ maps the point at infinity and the $2$-torsion points 
$(-m,0)$, $(-n,0)$ and $(0,0)$ of $E(m,n)$ exactly to the trivial solutions $(1,0,\pm 
1,\pm 1)$ of $Q(m,n)$. Consequently, all other rational points of $E(m,n)$ correspond 
to nontrivial points on $Q(m,n)$. Thus the following is true (cf. [F1], Prop. 3; 
[Ka], Thm 1). \smallskip 

\begin{thm} Let $m\not= n$ be nonzero integers. Then the quadratic forms $X^2+mY^2$ and 
$X^2+nY^2$ are concordant if and only if $E(m,n)$ possesses elements of (finite or infinite) 
order greater than two. 
\end{thm}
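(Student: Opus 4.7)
The plan is to derive this theorem as a direct consequence of Theorem 2.1, the only real work being the identification, inside $E(m,n)(\Q)$, of the preimage under $\varphi$ of the set of trivial solutions $(1,0,\pm 1,\pm 1)$ of $Q(m,n)$. Once that identification is made, the equivalence claimed in the theorem will be immediate from the bijection between $Q(m,n)$ and $E(m,n)$.

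First I would list the four trivial solutions as distinct projective points in $\P_3(\Q)$: since $X_1=0$ forces $X_0\neq 0$, we obtain exactly four projective classes corresponding to the four sign choices $(\pm 1,\pm 1)$ for $(X_2,X_3)$. Next I would evaluate $\varphi$ at each of them. Substitution in the formula $[nX_2-mX_3+(m-n)X_0 : mn(X_3-X_2) : mn(m-n)X_1]$ yields the indeterminate $[0:0:0]$ at $(1,0,1,1)$, which must be extended to the point at infinity $[0:0:1]$, and yields $[1:-n:0]$, $[1:-m:0]$, $[1:0:0]$ at $(1,0,1,-1)$, $(1,0,-1,1)$, $(1,0,-1,-1)$ respectively. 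These are exactly the identity and the three $2$-torsion points of $E(m,n)$, i.e., the elements of order at most $2$ in the Mordell--Weil group.

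The conclusion is then immediate: a rational point of $E(m,n)$ has order greater than $2$ if and only if it is neither the identity nor a $2$-torsion point, if and only if its image under $\psi$ is a nontrivial solution of $Q(m,n)$, if and only if the forms $X^2+mY^2$ and $X^2+nY^2$ are concordant (by the very definition of concordance).

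The only subtlety is the indeterminacy of $\varphi$ at $(1,0,1,1)$ (and, dually, of $\psi$ at the identity and the three $2$-torsion points), which must be resolved by a local computation or by appealing to the regular extensions of $\varphi$ and $\psi$ mentioned after Theorem 2.1. Once that bookkeeping is handled, the proof is a one-line corollary of the bijection, which is presumably why the excerpt's remark preceding the theorem already anticipates the content of the statement.
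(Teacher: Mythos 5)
Your proposal is correct and follows essentially the same route as the paper: the authors likewise deduce the theorem directly from the isomorphism of Theorem 2.1, observing that under $\psi$ (equivalently, under $\varphi$ as you compute) the point at infinity and the three $2$-torsion points $(0,0)$, $(-m,0)$, $(-n,0)$ correspond exactly to the trivial solutions $(1,0,\pm 1,\pm 1)$, so that points of order greater than two correspond precisely to nontrivial solutions. Your explicit evaluation of $\varphi$ at the four trivial points, including the resolution of the indeterminacy at $(1,0,1,1)$, is just the detailed bookkeeping the paper leaves implicit.
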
 

The fact that our mappings $\varphi$ and $\psi$ are true isomorphisms make them more suitable 
than other correspondences studied in the literature. Let us explain this statement in some 
detail.  

The book [Ko] deals with the classical congruent number problem. If $n$ is a natural number and 
$(X,Y,Z)$ are rational sides of a right triangle with area $n$ (where $X<Y<Z$), then the 
assignment $(X,Y,Z)\mapsto(Z^{2}/2,(X^{2}-Y^{2})Z/8)=(x,y)$ (see [Ko], Ch. I, \S 2, Prop. 2; Ch. I, 
\S 9, Prop. 19) maps the right triangles to rational points of the elliptic curve $E(-n,n)$ given 
by the affine equation $y^{2}=x^{3}-n^{2}x$. The assignment $(X_0, X_1, X_2, X_3)\mapsto( X_2/X_1, 
X_0/X_1, X_3/X_1)$ associates with any point $(X_0,X_1,X_2,X_3)$ on the intersection of the quadrics 
$X_0^2-nX_1^2=X_2^2$ and $X_0^2+nX_1^2=X_3^2$ with $X_i\geq 0$ and $X_1>0$ such a triangle. The 
composite function $\tau$ which is computed to be 
$$(X_0, X_1, X_2, X_3)\ \mapsto\ \left( \frac{X_0^2}{X_1^2}, -\frac{X_0X_2X_3}{X_1^3}\right)$$ 
obviously extends to a regular morphism $Q(-n,n)\rightarrow E(-n,n)$. The trivial elements of $Q(-n,n)$ 
are mapped to the point at infinity, i. e., to the neutral element of the elliptic curve $E(-n,n)$. 
This morphism, however, is not an isomorphism, but is a mapping of degree 4 whose image is exactly 
$2E(-n,n)$, i.e., the set of all doubled points of $E(-n,n)$. If we denote by $\mathbb{D}$ the doubling 
$P\mapsto2P$ on the elliptic curve $E(-n,n)$, then an easy computation shows that the diagram 
$$\xymatrix { & & Q(-n,n) \ar[rr]^{\hbox{$\tau$}} \ar[rd]_{\hbox{$\varphi$}} & & E(-n,n) \\ 
  & & & E(-n,n) \ar[ru]_{\hbox{$\mathbb{D}$}} }$$ 
commutes in which $\varphi$ is the mapping introduced at the beginning of this section. In other words, 
for any point $S$ in $Q(-n,n)$ we have $\tau(S)=2\varphi(S)$. Since the only torsion points on $E(-n,n)$ 
are the points of order 2, these points are mapped to the neutral element of $E(-n,n)$. This diagram  
explains why, for example, the points $(41^2/7^2, 29520/7^3)$ on $E(-31,31)$ and $(25/4,75/8)$ on 
$E(-5,5)$ (cf. [Ko], p. 7) are not in the image of $\tau$. However, they correspond to solutions
of the associated concordant form problem via the mapping $\varphi$. 

Note that the defect of the mapping $\tau$ of not being an isomorphism does not affect any of the assertions  
in [Ko]. In particular, the statement that $n$ is a congruent number if and only if $E(-n,n)$ contains 
rational points of infinite order is true, since the only points of finite order (the 2-torsion points) 
are mapped to the neutral element of $E(-n,n)$ via the mapping $\tau$, and they correspond to the trivial 
solutions of the concordant form problem and the degenerate right triangle. However, the defect of $\tau$ 
would make itself felt if one considered not only rational points on the elliptic curve in question, but also 
solutions over number fields. 

The use of a correspondence which is not an isomorphism runs into problems when torsion points of 
order greater than 2 occur, which is the case in the general concordant form problem considered in [O1]. 
The mapping $\sigma:Q(m,n)\rightarrow E(m,n)$ used in [O1] is given by 
$$(X_0, X_1, X_2, X_3)\ \mapsto\ \left(\frac{X_0^2}{X_1^2},\, \frac{X_0X_2X_3}{X_1^3}\right)$$
and hence is the same as the one considered by Koblitz up to the sign in the second component. It turns out 
that $\sigma$ is again the composition of the isomorphism $\varphi$ with an algebraically defined 
endomorphism of $E(m,n)$, namely the negative $-\mathbb{D}$ of the doubling mapping $\mathbb{D}$. Again, 
this mapping is of degree 4 and has $2E(m,n)$ as its image. But in this more general situa\-tion, the curve  
$E(m,n)$ may have points of order 4, and these points are mapped to a 2-torsion point in the image of $\sigma$. 
Now such 2-torsion points correspond to nontrivial solutions of the associated concordant
form problem. So for any pair of numbers $(m,n)$ for which the torsion subgroup of the elliptic curve $E(m,n)$ 
is isomorphic to $\mathbb{Z}_{2}\times\mathbb{Z}_{4}$ there exist nontrivial solutions of the concordant form 
problem in the sense of Definition 1.1 for the pair $(m,n)$ even if the rank of $E(m,n)$ is zero. Hence the statement 
in [O1] that if $E(m,n)$ has rank zero then nontrivial solutions exist if and only if the torsion group is ${\Z}_2\times 
{\Z}_8$ or ${\Z}_2\times{\Z}_6$ ([O1], p. 101) needs qualification. For example, if $(m,n)=(-1,3)$ then $(X,Y,Z,W) = 
(\pm 1, \pm 1, 0, \pm 2)$ are solutions of the equations $X^2+mY^2=Z^2$ and $X^2+nY^2=W^2$ corresponding to the 4-torsion 
points $(3,\pm 6)$ and $(-1,\pm 2)$ of the elliptic curve $y^2=x(x-1)(x+3)$; note that $E(-1,3)$ has rank 
zero (see [Y1]). These solutions are not covered by Main Corollary 1 (pp. 104/105) and Corollary 2 (p. 107) in [O1] 
where only solutions are considered for which all components are nonzero. 

We note in passing that the curve $y^2=x(x-1)(x+3)$ also provides a counterexample to Proposition 5.4 in [TY], 
which does not hold for $n=1$. In fact, $n=1$ is a $\pi/3$-congruent number (realized by the equilateral 
triangle with all sides equal to 2), which, in fact, corresponds to the 4-torsion points $(3, \pm 6)$ and 
$(-1,\pm 2)$ of the elliptic curve $E(-1,3)$. Hence the additional assumption $n>1$ is indispensible to make 
Proposition 5.4 in [TY] true. 

In the recent paper [Im], again only solutions with nonzero components are considered, as becomes clear from 
Definition 2 in this paper (which should presumably state that a solution $(X,Y,Z,W)$ is considered nontrivial 
only if $XYZW\not= 0$). Thus, again, solutions associated with 4-torsion points on the corresponding curve 
are lost. For example, for $m=1$ and $n=k^2$ where $k\in\{ 2,3,4,5,6,8,9,13\}$ solutions exist (namely  
$(X,Y,Z,W)=(0,1,1,k)$) which are not covered in [Im]. The condition that the rank of $E(1,k^2)$ be zero is satisfied 
for the quoted values of $k$ (whereas the rank is one for 
$k\in\{ 7,10,11,12\}$), as we verified with the SAGE software package. 

While a thorough analysis of the mapping $\sigma$ used in [O1] together with the two-descent on the elliptic 
curve $E(m,n)$ may reveal all the interesting phenomena concerning solutions to the concordant form problem 
from the study of rational points on $E(m,n)$ via $\sigma$, the approach via the isomorphism $\varphi$ seems 
to be much more direct and natural. Also, the correspondence between solutions $(r,s,k)$ of the congruent 
number problem and rational points of order $>2$ on the elliptic curve 
$$y^2 = x\bigl( x-(s\! -\! r)k\bigr)\bigl( x+(s\! +\! r)k\bigr)$$ 
becomes, via the correspondence with rational squares in arithmetic progressions, more lucid 
than in [F1], [Ka] and [Y1]. It also becomes clear from our calculations that two rational points 
of order $>2$ on the curve yield the same triangle if and only if they differ by a 2-torsion element 
in the Mordell-Weil group of the curve. Moreover, our calculations clarify Theorem 1 and Proposition 4 
in [F1]. More precisely, we will exactly determine those numbers $n$ occurring as $\theta$-congruent numbers 
corresponding to torsion elements of the associated elliptic curve, which was left open in [F1] and was 
clarified in the paper [F2], of which we were made aware only after finishing our paper.  
\bigskip 

\section{Nontrivial torsion solutions} 
When we speak of the torsion subgroup of an elliptic curve $E$ over $\Q$ we always mean the torsion 
subgroup of the Mordell-Weil group $E_{\mathbb{Q}}$ of rational points on $E$. A deep theorem by 
Mazur (see [M]) states that the torsion subgroup of any elliptic curve over $\Q$ must be one of the 
groups ${\Z}_m$ where $1\leq m\leq 10$ or $m=12$ or else of the groups ${\Z}_2\times{\Z}_{2n}$ where 
$1\leq n\leq 4$. Since $E(m,n)$ has three points of order $2$, namely $(0,0)$, $(-m,0)$ and $(-n,0)$, 
only the last four possibilities can occur for the curves studied here. We want to determine the exact 
conditions on $m$ and $n$ which determine the type of the torsion group. To do so, we compute all 
nontrivial torsion elements. This was essentially already done in [O1], and only a few additional 
calculations (omitted here) are needed to arrive at the following complete characterization of all 
torsion elements. (Also see [F2].) 

\begin{thm} We consider the elliptic curve $E(m,n)$ over $\Q$ where $m=-pk$ and $n=qk$ 
such that $p,q\in{\N}$ are coprime and $k\in{\N}$ is squarefree. 
\begin{enumerate}[\upshape (i)]
\item There are points of order $4$ if and only if $-m$ and $n-m$ are squares, say 
  $-m=u^2$ and $n=v^2-u^2$. In this case, the $4$-torsion points are exactly the 
  four points 
  $$\bigl( u^2-uv,\, \pm v(u^2-uv)\bigr)\quad\hbox{and}\quad \bigl( u^2+uv,\, 
  \pm v(u^2+uv)\bigr)\, .$$ 
\item There are points of order $8$ if and only if there are numbers $\xi,\eta\in
  {\N}$ such that $\xi^2+\eta^2$ is a square, say $\xi^2+\eta^2=\zeta^2$, and the 
  equations $m=-\xi^4$ and $n=\eta^4-\xi^4=\zeta^2(\eta^2-\xi^2)$ hold. In this case, 
  the $8$-torsion points are exactly the eight points 
  \begin{eqnarray*} 
  &\bigl(\ \xi\zeta (\xi\! +\! \eta )(\zeta\! + \!\eta), 
        \ \pm \xi\eta\zeta (\xi\! +\! \eta)(\zeta\! +\!\xi)(\zeta\! +\! \eta)\ \bigr), \\
  &\bigl(\ \xi\zeta (\xi\! +\! \eta )(\zeta\! - \!\eta), 
        \ \pm \xi\eta\zeta (\xi\! +\!\eta)(\zeta\! -\!\xi)(\zeta\! -\!\eta)\ \bigr), \\ 
  &\bigl(\ \xi\zeta (\xi\! -\!\eta )(\zeta\! + \!\eta), 
        \ \pm \xi\eta\zeta (\xi\! -\!\eta)(\zeta\! -\!\xi)(\zeta\! +\!\eta)\ \bigr), \\ 
  &\bigl(\ \xi\zeta (\xi\! -\!\eta )(\zeta\! - \!\eta), 
        \ \pm \xi\eta\zeta (\xi\! -\!\eta)(\zeta\! -\!\eta)(\zeta\! +\!\xi )\ \bigr).
  \end{eqnarray*} 
\item There are points of order $3$ (or, equivalently, points of order $6$) if and 
  only if there are coprime integers $a,b\not= 0$ with $a+2b\not=0$, $b+2a\not= 0$ 
  and $a\pm b\not= 0$ such that $m=a^3(a+2b)$ and $n=b^3(b+2a)$. In this case 
  the points of order $3$ are the two points 
  $$(a^2b^2,\, \pm a^2b^2(a+b)^2\bigr),$$ 
  and the points of order $6$ are the six points 
  \begin{eqnarray*} 
  &\bigl(\ -a^2b(b\! +\! 2a),\ \pm a^2b(b\! +\! 2a)(a^2\! -\! b^2)\ \bigr),\\ 
  &\bigl(\ -ab^2(a\! +\! 2b),\ \pm ab^2(a\! +\! 2b)(a^2\! -\! b^2)\ \bigr),\\ 
  &\bigl( ab(a\! +\! 2b)(b\! +\! 2a),\ \pm ab(a\! +\! 2b)(b\! +\! 2a)(a\! +\! b)^2\ \bigr).
  \end{eqnarray*}  
\item In all other cases for $m$ and $n$ the only torsion points are 
  the trivial points $(0,0)$, $(-m,0)$ and $(-n,0)$. 
\end{enumerate} 
\end{thm}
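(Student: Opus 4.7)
The plan is to treat the existence claims (i), (ii), (iii) separately and then to read off (iv) from Mazur's theorem: since $E(m,n)$ already has the full rational $2$-torsion $\{O,(0,0),(-m,0),(-n,0)\}$, its torsion subgroup must be $\Z_2\times\Z_{2n}$ with $n\in\{1,2,3,4\}$, so the only additional orders to detect are $4$, $6$ (equivalently $3$) and $8$.

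For (i), a point $P$ of order $4$ is characterised by $2P=T$ with $T$ one of the three nontrivial $2$-torsion points $(e_i,0)$. The chord-tangent formula converts $2P=(e_i,0)$ into the constraint $(x_P-e_i)^2=(e_i-e_j)(e_i-e_k)$, together with the requirement $y_P^2=x_P(x_P+m)(x_P+n)\in(\Q^*)^2$. Under $m<0<n$, the cases $2P=(0,0)$ and $2P=(-n,0)$ fail on positivity grounds ($mn>0$ respectively $y_P^2<0$ would be forced); only $2P=(-m,0)$ is viable, and combining the two conditions forces both $-m$ and $n-m$ to be rational squares, $-m=u^2$, $n-m=v^2$, from which the stated four $4$-torsion points follow by direct substitution. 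For (ii), an $8$-torsion point is a preimage under doubling of some rational $4$-torsion point $Q$, and the standard $2$-descent shows that $Q\in 2E(\Q)$ iff $x_Q$, $x_Q+m$, $x_Q+n$ are all rational squares. At $Q=(u^2\pm uv,\pm v(u^2\pm uv))$ these three quantities equal $u(u\pm v)$, $\pm uv$, $v(u\pm v)$; the minus branch is killed by $u<v$ (so $u(u-v)<0$), leaving the simultaneous squareness of $uv$, $u(u+v)$, $v(u+v)$. That in turn forces $u$ and $v$ to share their squarefree part, and the hypothesis that $k$ is squarefree with $\gcd(p,q)=1$ collapses this common squarefree part to $1$, yielding $u=\xi^2$, $v=\eta^2$ with the Pythagorean identity $\xi^2+\eta^2=\zeta^2$. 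The formulas $m=-\xi^4$ and $n=\eta^4-\xi^4=\zeta^2(\eta^2-\xi^2)$ then follow, and the eight $8$-torsion points are obtained by solving $2P=Q$ for each of the four rational $4$-torsion points $Q$.

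For (iii), I would use that points of order $3$ are exactly the inflection points of the cubic $y^2=f(x)=x(x+m)(x+n)$. The inflection condition at $P=(x_P,y_P)$ reads $f(x)-\ell(x)^2=(x-x_P)^3$, where $\ell(x)=y_P+\lambda(x-x_P)$ is the tangent line; evaluating at $x=0,-m,-n$ produces $(y_P-\lambda(x_P-e_i))^2=(x_P-e_i)^3$ for each $2$-torsion abscissa $e_i$, and since a nonzero rational whose cube is a rational square is itself a rational square, each of $x_P$, $x_P+m$, $x_P+n$ is a square. Writing them as $r^2$, $s^2$, $t^2$ (so $y_P=\pm rst$, $m=s^2-r^2$, $n=t^2-r^2$) and eliminating $\lambda$ and $y_P$ from the three resulting equations leaves the factored identity $(s-t)\bigl(r(s+t)+st\bigr)=0$; the branch $s=t$ is excluded because it forces $m=n$, so $r(s+t)+st=0$. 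Writing $s:t=a:b$ with $\gcd(a,b)=1$ and choosing the natural scale $s=a(a+b)$, $t=b(a+b)$, $r=-ab$ then gives $m=a^3(a+2b)$ and $n=b^3(b+2a)$, and the $3$-torsion points $(a^2b^2,\pm a^2b^2(a+b)^2)$; the six $6$-torsion points are obtained by adding each nontrivial $2$-torsion point to each of the two $3$-torsion points.

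The main obstacle is the Diophantine reduction in (ii) and (iii): in (ii), extracting the fourth-power condition $-m=\xi^4$ from the mere square conditions on $uv$ and $u(u+v)$ by combining them with the squarefreeness of $k$ and the coprimality of $p,q$; and in (iii), teasing the clean two-parameter description out of $r(s+t)+st=0$ while keeping careful track of the signs inherited from the $\pm$ in $(y_P-\lambda(x_P-e_i))^2=(x_P-e_i)^3$, and showing that the resulting rational $(a,b)$ can be taken to be coprime integers without further rescaling. The excluded cases $a+2b=0$, $b+2a=0$, $a\pm b=0$ correspond exactly to the degenerations $m=0$, $n=0$, $m=n$ of $E(m,n)$, providing a useful consistency check.
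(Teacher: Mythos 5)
The paper itself does not prove this theorem: it defers to [O1] (and [F2]), stating that ``only a few additional calculations (omitted here) are needed.'' Your self-contained route --- Mazur plus full rational $2$-torsion to reduce to $\Z_2\times\Z_{2j}$ with $j\le 4$, the halving criteria for orders $4$ and $8$, the flex condition for order $3$, and then arithmetic normalization using $\gcd(p,q)=1$ and $k$ squarefree --- is therefore a genuinely different (and natural) path, and for (i), (ii) and (iv) it is essentially complete: only $(-m,0)$, the largest root, can be halved over $\R$, so $4$-torsion forces $-m=u^2$, $n-m=v^2$; a $4$-torsion point $Q$ lies in $2E(\Q)$ iff $x_Q$, $x_Q+m$, $x_Q+n$ are all squares, and since $\gcd(u,v)^2=\gcd(pk,(p+q)k)=k$ is squarefree one gets $\gcd(u,v)=1$, whence $u=\xi^2$, $v=\eta^2$, $u+v=\zeta^2$ exactly as you say.

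The one genuine gap is the normalization step in (iii), which you flag but do not carry out. Your elimination correctly yields $(s-t)\bigl(r(s+t)+st\bigr)=0$ and hence only the proportionality $m:n=a^3(a+2b):b^3(2a+b)$; a priori $m=a^3(a+2b)\mu^2$ and $n=b^3(2a+b)\mu^2$ for some rational $\mu$, and ``choosing the natural scale'' $s=a(a+b)$, $t=b(a+b)$, $r=-ab$ is not free --- indeed the exact equalities are false without the hypotheses on $p,q,k$ (e.g.\ $(m,n)=(-20,108)$ has $3$-torsion but is not of the stated form). To close it, show that $d:=\gcd\bigl(a^3(a+2b),\,b^3(2a+b)\bigr)\in\{1,3\}$: a common prime divides neither $a$ nor $b$ (coprimality), hence divides both $a+2b$ and $2a+b$, hence divides $3b$ and equals $3$, and $9$ cannot divide both since otherwise $3\mid a$. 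Comparing with $m:n=-p:q$ in lowest terms then gives $\mu^2=k/d$, and squarefreeness of $k$ forces $k=d$ and $\mu^2=1$, i.e.\ $m=a^3(a+2b)$, $n=b^3(2a+b)$ on the nose; this is precisely the arithmetic the paper runs later in proving Theorem 3.3(ii), so it fits your framework with no new ideas. Finally, the explicit lists of the eight $8$-torsion and six $6$-torsion points, and the verification that $(a^2b^2,\pm a^2b^2(a+b)^2)$ is indeed a flex in the converse direction of (iii), are asserted but not computed; these are routine substitutions, consistent with the paper's own ``omitted calculations,'' but they should be done to claim the exhaustive lists.
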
 

\begin{rem} If $k=d^2\ell$ is not squarefree, then the elliptic curves $E(-p\ell ,q\ell)$ 
and $E(-pk,qk)$ are isomorphic (as algebraic groups) via the isomorphism $(x,y)\mapsto 
(d^2x,d^3y)$. So the torsion subgroups of the corresponding Mordell-Weil groups of 
rational points are also isomorphic. As a special case, we note that the curve 
$E(-pk,qk)$ contains points of order $4$ if and only if $-m=pk$ and $n-m=qk+pk$ are 
squares (irrespectively of whether or not $k$ is squarefree). 
\end{rem} 

We are now ready to give a complete classification of all concordant forms and 
$\theta$-congruent triangles which correspond to torsion solutions of the associated 
elliptic curve. This classification is based on the following theorem. (Cf. [F2].) 

\begin{thm} We consider the elliptic curve $E(m,n)$ where $m=-pk$ and $n=qk$ such that 
$p,q\in{\N}$ are coprime and $k\in{\N}$ is squarefree. Let $T$ be the torsion subgroup of 
$E(m,n)$. 
\begin{enumerate}[\upshape (i)]
\item If $T\cong {\Z}_2\times{\Z}_4$ or $T\cong {\Z}_2\times{\Z}_8$ then $k=1$.
\item If $T\cong{\Z}_2\times{\Z}_6$ then $k=1$ or $k=3$. 
\end{enumerate}
$\bigl($The examples in the next section will show that each of the possible cases occurs 
for an infinite number of elliptic curves $E(m,n)$.$\bigr)$  
\end{thm}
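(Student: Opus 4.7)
The plan is to exploit the explicit parametrizations of torsion points given in Theorem 3.1, combined with the coprimality condition $\gcd(p,q)=1$ and the squarefreeness of $k$.

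For part (i), both groups $\Z_2\times\Z_4$ and $\Z_2\times\Z_8$ contain elements of order $4$, so in either case $E(m,n)$ has a point of order $4$. Theorem 3.1(i) then supplies integers $u,v$ with $pk=-m=u^2$ and $(p+q)k=n-m=v^2$. The key step uses squarefreeness of $k$: every prime dividing $k$ must appear with even total exponent in $u^2=pk$, hence must divide $u$, so $k\mid u$. Writing $u=ka'$ gives $p=k{a'}^2$, i.e.\ $k\mid p$. The same reasoning applied to $v^2=(p+q)k$ gives $k\mid(p+q)$, and combined with $k\mid p$ this forces $k\mid q$. Since $\gcd(p,q)=1$, one concludes $k=1$.

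For part (ii), $T\cong\Z_2\times\Z_6$ implies $E(m,n)$ has a point of order $3$, so Theorem 3.1(iii) gives coprime integers $a,b$ with $m=a^3(a+2b)$ and $n=b^3(b+2a)$. From $\gcd(p,q)=1$ one has $k=\gcd(pk,qk)=\gcd(|m|,n)$, so the task is to bound $\gcd\bigl(a^3(a+2b),\,b^3(b+2a)\bigr)$. Using $\gcd(a,b)=1$, one checks that $\gcd(a,b+2a)=\gcd(a+2b,b)=1$, and these reductions peel off the cubes to give
\[\gcd\bigl(a^3(a+2b),\,b^3(b+2a)\bigr)=\gcd(a+2b,\,b+2a).\]
Now $\gcd(a+2b,b+2a)$ divides both $2(a+2b)-(b+2a)=3b$ and $2(b+2a)-(a+2b)=3a$, hence divides $\gcd(3a,3b)=3$. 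Therefore $k\in\{1,3\}$; both values are automatically squarefree, so no further reduction is needed.

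The only real work is the coprimality bookkeeping in (ii): one must apply the reductions in the right order to strip off the factors $a^3$ and $b^3$ before the standard divisibility argument with $3a$ and $3b$ finishes the job. Part (i) is essentially immediate once one observes how squarefreeness of $k$ upgrades $k\mid u^2$ to $k\mid u$.
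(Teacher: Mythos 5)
Your proof is correct and follows essentially the same route as the paper: part (i) uses squarefreeness of $k$ to force every prime of $k$ (equivalently $k$ itself) into both $p$ and $p+q$ via the squares $-m$ and $n-m$ from Theorem 3.1(i), contradicting $\gcd(p,q)=1$ unless $k=1$, and part (ii) rests on the same divisibility facts ($\gcd(a,2a+b)=\gcd(b,a+2b)=1$ and the linear combinations yielding $3a$ and $3b$). The only cosmetic difference is that in (ii) you package the argument as the gcd identity $k=\gcd(|m|,n)=\gcd(a+2b,\,2a+b)\mid 3$ instead of the paper's prime-by-prime reasoning, which incidentally makes the squarefreeness of $k$ unnecessary for that part.
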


\begin{proof} Let us first consider the case that that $T\cong{\Z}_2\times{\Z}_4$ or 
$T\cong{\Z}\times{\Z}_8$. Assume $k\not=1$; then there is a prime divisor $t$ of $k$. 
The number $t$ then divides both $-m=pk$ and $n-m=(p+q)k$ which are squares according to 
Thm 3.1(i). Since $k$ is squarefree this implies that $t$ divides both $p$ and 
$p+q$, which is impossible because $p$ and $q$ are coprime by assumption. Thus the assumption 
$k\not= 1$ is wrong, and we must have $k=1$ in this case. \par\smallskip 
Let us now consider the case that $T\cong{\Z}_2\times{\Z}_6$. By Thm 3.1(iii) there are coprime 
numbers $a,b\in{\Z}\setminus\{ 0\}$ with $a+2b\not=0$, $b+2a\not=0$ and $a\pm b\not= 0$ such 
that $m=a^4+2a^3b$ and $n=2ab^3+b^4$. Let $t$ be a prime divisor of $k$. Then $t$ divides 
both $-pk=m=a^3(a+2b)$ and $qk=n=b^3(2a+b)$. It is obvious that if the prime $t$ were a 
divisor of $a$ then it would also be a divisor of $b$, and vice versa; this, however, is impossible 
because $a$ and $b$ are coprime. Thus $t$ divides neither $a$ nor $b$, hence divides both $a+2b$ 
and $2a+b$, hence divides $2(a+2b) = (2a+b)+3b$, hence divides $3b$ and consequently must be $3$. 
We have shown that $3$ is the only possible prime divisor of $k$. This implies that 
$k=1$ or $k=3$. 
\end{proof} 

The implication of this theorem for concordant forms and $\theta$-congruent numbers will be 
elucidated in the next section. 

\section{Interpretation and conclusions}

Theorem 3.3 may be interpreted in terms of the concordant form problem and of the generalized 
congruent number problem.\\

\begin{thm} Let $m=-pk$ and $n=qk$ such that $p,q\in{\N}$ are coprime and $k\in{\N}$ is squarefree.
\begin{enumerate}[\upshape (i)]
\item If the quadratic forms $X^2+mY^2$ and $X^2+nY^2$ are concordant due to $4$- or $8$-torsion, 
  then $k=1$. 
\item If the quadratic forms $X^2+mY^2$ and $X^2+nY^2$ are concordant due to $3$- or $6$-torsion then 
  $k=1$ or $k=3$.  
\end{enumerate}
\end{thm}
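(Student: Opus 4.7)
The proof should be a direct corollary of Theorem 3.3 combined with the correspondence described in Section 2, so my plan is to set up the right translation between concordance and torsion on $E(m,n)$ and then invoke Theorem 3.3 mechanically. First I would recall from Theorem 2.2 that the quadrics defining $Q(m,n)$ and the elliptic curve $E(m,n)$ are related by the mutually inverse isomorphisms $\varphi$ and $\psi$, and that the trivial solutions $(1,0,\pm 1,\pm 1)$ correspond precisely to the point at infinity together with the three 2-torsion points $(0,0)$, $(-m,0)$, $(-n,0)$. Thus a nontrivial solution of the concordant form system coming from an $N$-torsion point (for $N\in\{3,4,6,8\}$) literally means that $E(m,n)$ has a rational point of order $N$.

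The next step is to package this into a statement about the torsion subgroup $T$ of $E(m,n)$. Since the three 2-torsion points are always present in $T$, the subgroup $\Z_2\times\Z_2$ sits inside $T$. By Mazur's theorem, invoked in the opening paragraph of Section~3, the only admissible possibilities for $T$ that contain $\Z_2\times\Z_2$ are $\Z_2\times\Z_{2n}$ for $n\in\{1,2,3,4\}$. Consequently: the existence of a 4-torsion or 8-torsion rational point forces $T\cong \Z_2\times\Z_4$ or $T\cong \Z_2\times\Z_8$, while the existence of a 3-torsion or 6-torsion rational point forces $T\cong\Z_2\times\Z_6$ (a 3-torsion point together with the full 2-torsion yields a 6-torsion point automatically).

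With this identification in hand, part (i) is precisely Theorem 3.3(i) and part (ii) is precisely Theorem 3.3(ii), so the proof collapses to quoting those two statements under the standing hypothesis that $k$ is squarefree and $\gcd(p,q)=1$. No further calculation is required because the number-theoretic work — deducing $k=1$ from the squareness of $-m$ and $n-m$, and deducing $k\in\{1,3\}$ from the parametrisation $m=a^3(a+2b)$, $n=b^3(b+2a)$ with $\gcd(a,b)=1$ — has already been carried out in the proof of Theorem 3.3.

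The main (and essentially only) obstacle is making sure the translation step is unambiguous: one must check that an $N$-torsion point of $E(m,n)$ really is sent by $\psi$ to a \emph{nontrivial} point of $Q(m,n)$ when $N>2$, so that "concordance due to $N$-torsion" is a well-defined notion. This is immediate from the explicit statement in Section~2 that $\psi$ maps the point at infinity and the three 2-torsion points bijectively onto the trivial solutions, so every rational point of order strictly greater than two produces a genuinely nontrivial solution of the concordant form system. Once this is noted, the theorem follows as stated.
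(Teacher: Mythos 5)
Your proposal is correct and follows essentially the same route as the paper, which simply declares the theorem an immediate consequence of Theorem 3.3 via the correspondence of Section 2; your extra remarks (Mazur plus the full 2-torsion pinning down $T$, and $\psi$ sending points of order $>2$ to nontrivial solutions) just make explicit the translation the paper leaves implicit.
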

\begin{proof} This is an immediate consequence of Thm 3.3. 
\end{proof} 

\begin{thm} Let $\theta=\arccos (r/s)$ where $r\in{\Z}$ and $s\in{\N}$ are coprime numbers such that 
$|r|<s$. Moreover, let $k\in{\N}$ be a squarefree number.  
\begin{enumerate}[\upshape (i)] 
\item If $k$ is odd and is a $\theta$-congruent number due to $4$- or $8$-torsion then $k=1$. 
\item If $k$ is even and is a $\theta$-congruent number due to $4$- or $8$-torsion then $k=2$. 
\item If $k$ is odd and is a $\theta$-congruent number due to $3$- or $6$-torsion then $k=1$ or $k=3$.
\item If $k$ is even and is a $\theta$-congruent number due to $3$- or $6$-torsion then $k=2$ or $k=6$.
\end{enumerate} 
\end{thm}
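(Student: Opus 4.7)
The plan is to transport every statement about $\theta$-congruent numbers to a statement about concordant forms via the bijection $f:\mathfrak{N}\to\mathfrak{F}$ of Theorem 1.5, and then feed the output into Theorem 4.1. Because $r$ and $s$ are coprime, exactly one of the two cases in Theorem 1.5 applies: either $r\not\equiv s\pmod{2}$, in which case $f(r,s,k)=(s-r,s+r,k)$, or both $r,s$ are odd, in which case $f(r,s,k)=\bigl((s-r)/2,(s+r)/2,2k\bigr)$. In either case a quick computation (which I would record explicitly) shows that the two first components are coprime natural numbers, so the image really is a solution of the concordant form problem in the normalized form required by Theorem 4.1. The torsion order of the point on the elliptic curve $E(m,n)$ associated to the $\theta$-triangle agrees with the torsion order of the corresponding concordant form solution, because both are read off the same point of $E(m,n)$ through the isomorphism $\varphi$ of Theorem 2.1. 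Thus ``due to $4$- or $8$-torsion'' (resp.\ ``$3$- or $6$-torsion'') is preserved by $f$.

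With this reduction in place, the four assertions become a bookkeeping exercise on the squarefree part of the third component of $f(r,s,k)$. The subtle point, and in my view the main obstacle, is that the bijection $f$ does \emph{not} always deliver a triplet whose third entry is already squarefree: in the ``both odd'' branch the third entry is $2k$, which is squarefree when $k$ is odd but equals $4\ell$ when $k=2\ell$ with $\ell$ odd and squarefree. To apply Theorem 4.1 one must first normalise via the equivalence $(p,q,a^{2}k)\sim(p,q,k)$ recorded after Definition 1.2, or, equivalently, via the isomorphism $(x,y)\mapsto(d^{2}x,d^{3}y)$ of Remark 3.2, which replaces $4\ell$ by its squarefree part $\ell=k/2$.

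Once this normalisation is performed, the four cases fall out uniformly. For (i), $k$ odd: in the mixed-parity branch the third entry is $k$, so Theorem 4.1(i) forces $k=1$; in the both-odd branch the third entry is $2k$, already squarefree, and Theorem 4.1(i) would force $2k=1$, which is absurd, so this branch cannot occur. For (ii), $k$ even: the mixed-parity branch would force the even $k$ to equal $1$ and is therefore empty, while in the both-odd branch the squarefree part of $2k=4(k/2)$ is $k/2$, and Theorem 4.1(i) yields $k/2=1$, i.e.\ $k=2$. Parts (iii) and (iv) are handled by the identical dichotomy but invoking Theorem 4.1(ii), which allows $k'\in\{1,3\}$ for the squarefree component $k'$: in (iii) the mixed-parity branch gives $k\in\{1,3\}$ and the both-odd branch is ruled out because $2k\in\{1,3\}$ has no solution; in (iv) the mixed-parity branch is ruled out on parity grounds and the both-odd branch gives $k/2\in\{1,3\}$, i.e.\ $k\in\{2,6\}$.

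Apart from the squarefree normalisation already emphasised, the only calculations needed beyond invoking the previous theorems are the elementary coprimality checks for the pairs $(s-r,s+r)$ and $\bigl((s-r)/2,(s+r)/2\bigr)$, and a verification that the point on $E(m,n)$ really does have the same order when one passes through $f$ and the isomorphism $\varphi$. Both are routine and could be dispatched in a few lines in the final write-up.
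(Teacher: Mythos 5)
Your proposal is correct and takes essentially the same route as the paper: translate $(r,s,k)$ into a concordant-form triplet via Theorem 1.5, absorb the square factor $4$ when $k$ is even (Remark 3.2), and then invoke Theorem 4.1 (i.e.\ Theorem 3.3) in each parity branch. The only difference is cosmetic: in the both-odd, $k$-odd branch you rule the case out as vacuous (since $2k=1$ or $2k\in\{1,3\}$ is impossible), whereas the paper works there with the curve with halved coefficients $E\bigl(-(s-r)k/2,\,(s+r)k/2\bigr)$ and reads off $k=1$ resp.\ $k\in\{1,3\}$ directly; both versions give exactly the stated implications.
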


\begin{proof} We remember that the associated elliptic curve is given by $E(m,n)$ where
$m=-(s-r)k$ and $n=(s+r)k$ if $r\not\equiv s$ mod 2 (first case) and where $m=-(s-r)k/2$ and 
$n=(s+r)k/2$ if $r,s$ are both odd (second case). 
In the first case, the coefficients satisfy the hypotheses of Thm 3.3 and we see that in the 
situation of (i) we have $k=1$ and for (iii) we have $k=1$ or $k=3$. The same holds in the second case 
when $k$ is odd. It remains to consider the second case with $k=2\ell$ being even (and $\ell$ being odd 
since $k$ is assumed to be squarefree). In this situation the elliptic curve $E(m',n')$ associated with 
the triplet $(r,s,\ell )$ is given by $m'=-(s-r)\ell /2$ and $n'=(s+r)\ell /2$, and these coefficients 
satisfy the hypotheses of Thm 3.3. So we can conclude that $\ell =1$ (and hence $k=2$) in the situation 
(ii) and $\ell =1$ or $\ell =3$ (and hence $k=2$ or $k=6$) in the situation (iv).
\end{proof} 

Any of the above situations occurs for infinitely many elliptic curves. The characterization 
of the curves with a prescribed torsion subgroup, together with the arguments in the proof of Thm 
3.3, gives rise to several series of examples which will illustrate this fact. 

\begin{exa} {\bf Torsion solutions of order 4.} 
Let $u,v$ be any coprime numbers with $u<v$ and let $m=-u^{2}$ and $n=v^{2}-u^{2}$. Then the torsion 
subgroup $T\subseteq E_{\mathbb{Q}}(m,n)$ contains $\mathbb{Z}_{2}\times\mathbb{Z}_{4}$. Hence any point 
of order 4 defines a solution of the concordant form problem given by the triplet $(p,q,k)=(-m,n,1)$. 
If $m$ and $n$ are both odd (which is equivalent to saying that $v$ is even) then this point defines
a solution to the generalized congruent number problem given by the triplet $(r,s,k)=((n-m)/2,\, (n+m)/2,\, 
1)$ as well. If $m$ and $n$ have different parities, then the elliptic curve $E(4m,4n)$ also has a point
of order 4, which defines a solution to the generalized congruent number problem given by the triplet 
$(n-m,\, n+m,\, 2)$. Explicit instances are given by $(u,v)=(1,2)$, which corresponds to $(m,n)=(-1,3)$ 
and represents the situation that $v$ is even, and by $(u,v)=(1,3)$, which corresponds to $(m,n)=(-1,8)$ and 
represents the situation that $v$ is odd). 
\end{exa}  

\begin{exa} {\bf Torsion solutions of order 8.} Let $(\xi,\eta,\zeta)$ be any primitive Pythagorean triplet 
so that $\xi^{2}+\eta^{2}=\zeta^{2}$ and $\xi<\eta$ (note that $\xi$ and $\eta$ then automatically have 
unequal parities!) and let $m=-\xi^{4}$ and $n=\eta^{4}-\xi^{4}$. Then the torsion subgroup $T\subseteq 
E_{\mathbb{Q}}(m,n)$ is isomorphic to $\mathbb{Z}_{2}\times\mathbb{Z}_{8}$. So any point of order 8 defines a 
solution of the concordant form problem given by the triplet $(p,q,k)=(-m,n,1)$. If $m$ and $n$ are both odd 
(which means $\xi$ is odd and $\eta$ is even), then this point also defines a solution of the generalized 
congruent number problem given by $(r,s,k)=((n-m)/2,\, (n+m)/2,\, 1)$. If $m$ is even (and \it a fortiori \rm 
$n$ is odd), then the elliptic curve $E(4m,4n)$ also has points of order $8$, each of which defines a solution
of the generalized congruent number problem given by the triplet $(n-m,\, n+m,\, 2)$. Explicit instances are 
given by $(\xi,\eta,\zeta)=(3,4,5)$, which corresponds to $(m,n)=(-81,175)$ and represents the situation 
of a primitive Pythagorean triplet with $\xi$ being odd), and by by $(\xi,\eta,\zeta)=(8,15,17)$, which 
corresponds to $(m,n)=(-4096,\, 46\, 529)$ and represents the situation of a primitive Pythagorean triplet with 
$\xi$ being even). 
\end{exa} 

\begin{exa} {\bf Torsion solutions of order 3 or 6.} Let $a,b$ be any coprime numbers with $a<0$, $b>0$, 
$a+2b>0$, $2a+b>0$ and $a+b\neq 0$, and let $m=a^3(a+2b)$ and $n=b^3(2a+b)$. Then the torsion subgroup 
$T\subseteq E_{\mathbb{Q}}(m,n)$ is isomorphic to $\mathbb{Z}_{2}\times\mathbb{Z}_{6}$. From the proof of Thm 
3.3 we know that the only possible common divisors of $m$ and $n$ are 1 or 3.
\begin{itemize}
\item First case: $\gcd (a+2b,2a+b)=1$. In this case any point of order 3 or 6 defines a solution of
the concordant form problem given by the triplet $(p,q,k)=(-m,n,1)$. If $m$ and $n$ are both odd then 
this point also defines a solution of the generalized congruent number problem given by $(r,s,k)=
((n-m)/2,\, (n+m)/2,\, 1)$. If $m\not\equiv n$ mod 2 then the elliptic curve $E(4m,4n)$ also has
points of order 3 and 6, which define solutions of the generalized congruent number problem given by the 
triplet $(n-m,\, n+m,\, 2)$.
\item Second case: $\gcd (a+2b,2a+b)=3$. (Note that this situation occurs when $a\equiv b$ mod 3.) Let 
$p=-m/3$ and $q=n/3$. Then any point of order 3 or 6 defines a solution of the concordant form problem given 
by the triplet $(p,q,k)=(-m/3,\, n/3,\, 3)$. If $m$ and $n$ are both odd, then this point also defines a 
solution of the generalized congruent number problem given by $(r,s,k)=((n-m)/6,\,(n+m)/6,\, 3)$. If 
$m\not\equiv n$ mod 2 then the elliptic curve $E(4m,4n)$ also has points of order 3 and 6, which define 
solutions to the generalized congruent number problem given by the triplet $((n-m)/3,\, (n+m)/3,\, 6)$.
\end{itemize} 
Explicit instances are given as follows: 
\begin{itemize}
\item the example $(a,b)=(-1,3)$ corresponds to $(m,n)=(-5,27)$ and represents the situation that $a,b$ are 
  odd coprime numbers with different signs satisfying the congruence condition $a\not\equiv b$ mod $3$ and the 
  inequalities $a+2b>0$ and $2a+b>0$; 
\item the example $(a,b)=(-2,5)$ corresponds to $(m,n)=(-64,\, 125)$ and represents the situation 
  that $a,b$ are coprime numbers with different signs and different parities satisfying the congruence condition 
  $a\not\equiv b$ mod $3$ and the inequalities $a+2b>0$ and $2a+b>0$; 
\item the example $(a,b)=(-5,\, 13)$ corresponds to $(m,n)=(-875\cdot 3,\, 2197\cdot 3)$ and represents the 
  situation that $a,b$ are odd coprime numbers with different signs satisfying the congruence condition 
  $a\equiv b$ mod $3$ and the inequalities $a+2b>0$ and $2a+b>0$; 
\item the example $(a,b)=(-2,7)$ corresponds to $(m,n)=(-32\cdot 3,\, 343\cdot 3)$ and represents the 
 situation that $a,b$ are coprime numbers with different signs and different parities satisfying the 
congruence condition $a\equiv b$ mod $3$ and the inequalities $a+2b>0$ and $2a+b>0$.
\end{itemize} 
\end{exa} 

\begin{rem} From the above considerations it is clear that each of the possible situations occurs for 
an infinite number of cases. Furthermore, with these examples we answer a question left open in [F1] 
(Remark 1 after Proposition 4). Namely, in [F1] it is shown that if $n\not\in\{ 1,2,3,6\}$ then $n$ is a 
$\theta$-congruent number if and only if the rank of the associated elliptic curve is positive; i.e., 
it is not possible to obtain a corresponding $\theta$-triangle from torsion points on this elliptic curve. 
In [F1] it was shown that the condition $n\not\in\{ 1,2\}$ is indispensible in this statement; i.e.,  
there are torsion solutions for $n=1$ and $n=2$. For $n=3$ and $n=6$ this was left as an open problem, 
which is now answered affirmatively by the above considerations. \end{rem} 

The following result yields a nice geometric characterization of the 4-torsion solutions. 

\begin{thm} 
Let $p,q\in{\N}$ be coprime, let $k\in{\N}$ be squarefree, and let $m=-pk$ and 
$n=qk$. Consider a rational point $P$ on the curve $E(m,n)$, the associated 
$\theta$-congruent triangle $\Delta$ and the associated triplet $T$ of rational 
squares in an arithmetic progression. Then the following statements are equivalent: 
\begin{enumerate}[\upshape (i)]
\item $P$ has order four;
\item $T$ contains the number zero; 
\item $\Delta$ is isosceles, i.e., has two equal sides.
\end{enumerate} 
\end{thm}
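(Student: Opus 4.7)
The plan is to reduce each of (i), (ii), (iii) to the single algebraic criterion that $X_2 = 0$ in the $Q(m,n)$-solution $(X_0,X_1,X_2,X_3) = \psi(P)$ provided by the isomorphism of Theorem 2.1. The equivalence (ii)$\Leftrightarrow X_2 = 0$ is then immediate: the defining equations
\[
X_0^2 - pk\,X_1^2 = X_2^2, \qquad X_0^2 + qk\,X_1^2 = X_3^2
\]
exhibit $T = \bigl((X_2/X_1)^2, (X_0/X_1)^2, (X_3/X_1)^2\bigr)$ as three rational squares in arithmetic progression of step $k$, and the case $X_3 = 0$ is ruled out since $n = qk > 0$ would then force the trivial $X_0 = X_1 = 0$.

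For (iii)$\Leftrightarrow X_2 = 0$, I would combine the law of cosines with the area condition $\tfrac{1}{2} ab \sin\theta = k\sqrt{s^2 - r^2}$ (which gives $ab = 2ks$) to derive
\[
(a-b)^2 = a^2 + b^2 - 4ks, \quad c^2 = a^2 + b^2 - 4kr, \quad (a+b)^2 = a^2 + b^2 + 4ks.
\]
Dividing through by $4$ exhibits $(a-b)^2/4$, $c^2/4$, $(a+b)^2/4$ as an arithmetic progression of rational squares; matching this with the concordant-form solution should identify
\[
[X_0 : X_1 : X_2 : X_3] = [c : 2 : a-b : a+b]
\]
in both parity cases of Theorem 1.5 (since $(m,n) = (-(s-r)k,(s+r)k)$ is the same in the two cases, only the normalisation of the step size differs). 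Hence $X_2 = 0$ iff $a = b$, i.e.\ iff the two sides of $\Delta$ enclosing the angle $\theta$ are equal, which is the precise sense of ``isosceles'' meant in this setting.

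For (i)$\Leftrightarrow X_2 = 0$: if $X_2 = 0$ then the concordant-form equations immediately force $-m = pk = (X_0/X_1)^2$ and $n - m = (p+q)k = (X_3/X_1)^2$ to be rational squares, which by Theorem 3.1(i) is exactly the criterion for $E(m,n)$ to carry 4-torsion; the resulting point $[u : 1 : 0 : v] \in Q(m,n)$ with $u^2 = -m$ and $v^2 = n - m$ is then carried by $\varphi$ to one of the four 4-torsion points listed in that theorem. Conversely, $\psi$ should send every 4-torsion point to a solution with $X_2 = 0$; the cleanest conceptual route is to observe that the four 4-torsion points constitute a single coset of $E(m,n)[2]$ and that $\psi$ collapses each such coset (together with the four excluded trivial points on the $Q(m,n)$ side) to one projective point, here necessarily $[u : 1 : 0 : v]$.

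The main obstacle is the explicit algebraic identification $[X_0 : X_1 : X_2 : X_3] = [c : 2 : a-b : a+b]$ together with the bookkeeping of the two parity cases of Theorem 1.5; once this has been established, the full chain of equivalences follows directly from Theorems 2.1 and 3.1(i).
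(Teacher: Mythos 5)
Your overall strategy --- reducing (i), (ii), (iii) to the single condition $X_2=0$ for the point $\psi(P)\in Q(m,n)$ --- is the same as the paper's, and your treatment of (ii) and (iii) is sound: the identification $[X_0:X_1:X_2:X_3]=[c:2:a-b:a+b]$ is correct in both parity cases (indeed $(m,n)=(-(s-r)k,(s+r)k)$ in either case), and it makes the step (iii) $\Leftrightarrow$ $X_2=0$ explicit, where the paper simply appeals to the correspondences of Theorems 1.4 and 2.1.

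The gap is in (i), specifically in your converse direction. Your ``conceptual route'' asserts that $\psi$ collapses the coset of order-four points (and the trivial points) to a single projective point $[u:1:0:v]$. That is false: $\varphi$ and $\psi$ are mutually inverse isomorphisms --- this is precisely the point Section 2 of the paper insists on --- so $\psi$ is injective; the four points of order four go to the four \emph{distinct} points $[\pm u:1:0:\pm v]$, and the trivial solutions $(1,0,\pm 1,\pm 1)$ are the distinct images of the identity and the three $2$-torsion points. The maps that do collapse $E[2]$-cosets are the degree-four maps $\tau$ and $\sigma$ of Koblitz and Ono discussed in Section 2, and conflating these with $\psi$ is exactly the confusion the paper aims to remove. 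Moreover, even your forward direction leans on the unverified assertion that $\varphi$ carries $[u:1:0:v]$ to one of the listed order-four points; this is the crux computation of the equivalence (i) $\Leftrightarrow$ (ii) and cannot just be asserted. The repair is short and is how the paper argues: for $P=(x,y)$ not of order $\le 2$, the $X_2$-coordinate of $\psi(P)$ is $-(x+m)\bigl(y^2+m(x+n)^2\bigr)$, so $X_2=0$ forces $y^2=-m(x+n)^2$; combining with $y^2=x(x+m)(x+n)$ gives $(x+m)^2=-m(n-m)$, hence $-m=u^2$, $n-m=v^2$, $x=u^2\pm uv$, $y=\pm u(x+n)$, i.e.\ $P$ is one of the four order-four points of Theorem 3.1(i) --- and this chain is reversible, so it settles both directions at once. (Alternatively, substitute the explicit order-four points into $\psi$ and check that the third coordinate vanishes, or combine your forward computation with the injectivity of $\varphi$ and the fact that there are exactly four points of order four.)
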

\begin{proof}
Under the correspondences in Thm 1.4 we have $a=b$ for the sides of $\Delta$ if 
and only if $\alpha =0$ for the smallest element in the progression $\alpha^2<\beta^2
<\gamma^2$, which is the case if and only if $X_2=0$ for a corresponding point 
$(X_0,X_1,X_2,X_3)$ on $Q(m,n)$ as defined in Thm 2.1 (since $\alpha =X_2/X_1$, $\beta=X_0/X_1$ 
and $\gamma =X_3/X_1$ under the correspondence between points on this curve and rational 
squares in arithmetic progression). Now the condition $X_2=0$ corresponds to the equation  
$(x+m)(y^2+m(x+n)^2)=0$ via the biregular mapping in Thm 2.1. This equation yields either 
$x=-m$ (which is one of the 2-torsion points) or else $y^2+m(x+n)^2=0$. In the latter case 
$-m$ must clearly be a square. Since both $y^2=-m(x+n)^2$ and $y^2=x(x+m)(x+n)$, we have  
$$\begin{array}{ll} 
  0 &= x(x+m)(x+n) + m(x+n)^2 \\
    &= (x+n)(x^2+2mx+mn) \\
    &= (x+n)\bigl( (x+m)^2+m(n-m)\bigr) \end{array}$$ 
and hence $(x+m)^2=-m(n-m)$. Hence not only $-m$, but also $n-m$ must be a square. 
This is exactly the condition that $E(m,n)$ contains points with $4$-torsion. Writing 
$-m=u^2$, the equation $y^2=-m(x+n)^2=u^2(x+n)^2$ yields $y=\pm u(x+n)$, which shows 
that the points $(x,y)$ on $E(m,n)$ associated with isosceles triangles are exactly the 
points of order $4$.  
\end{proof} 

The isosceles triangles occurring as solutions of the generalized congruent number problem 
can be characterized in terms of intrinsic geometrical properties, as we now show.
(Note that when we speak of an isosceles $\theta$-triangle we always assume the angle 
$\theta$ to be between the two equal sides.) 

\begin{thm} There is an isosceles rational $\theta$-triangle if
and only if $\sin(\theta/2)$ is rational. If $\sin(\theta/2)=\varrho/\sigma$ 
where $\varrho,\sigma\in\mathbb{N}$ are coprime, then the sides of
the unique rational $\theta$-triangle with squarefree $k$ are given
by $a,a,c$ with $a=k\sigma$ and $c=2ak\sin(\theta/2)=2k\varrho$.
\end{thm}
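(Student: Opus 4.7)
The plan is to split the argument into a short geometric/rationality step for the equivalence, followed by an arithmetic case analysis on the parity of $\sigma$ that produces the explicit formulas for the sides.

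First I would drop the altitude from the apex between the two equal sides $a$ (or invoke the law of cosines) to obtain the third side and the area in the form
$$c \;=\; 2a\sin(\theta/2),\qquad A \;=\; \tfrac12 a^2\sin\theta \;=\; a^2\sin(\theta/2)\cos(\theta/2).$$
If the triangle is rational with $a\neq 0$, then $\sin(\theta/2)=c/(2a)\in\mathbb{Q}$; conversely, once $\sin(\theta/2)\in\mathbb{Q}$, any positive rational choice of $a$ yields a rational isosceles triangle with angle $\theta$. That settles the first equivalence.

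Next, writing $\sin(\theta/2)=\varrho/\sigma$ in lowest terms, I would reduce
$$\cos\theta \;=\; 1-2\sin^2(\theta/2) \;=\; \frac{\sigma^2-2\varrho^2}{\sigma^2}$$
to lowest terms $r/s$. A short parity check gives two cases: if $\sigma$ is odd, the fraction is already reduced, so $r=\sigma^2-2\varrho^2$ and $s=\sigma^2$; if $\sigma$ is even (forcing $\varrho$ odd), a single factor of $2$ cancels, giving $r=(\sigma^2-2\varrho^2)/2$ and $s=\sigma^2/2$, with $\gcd(r,s)=1$ following from $\gcd(\varrho,\sigma)=1$. Expanding $s^2-r^2=(s-r)(s+r)$ then yields $s^2-r^2=4\varrho^2(\sigma^2-\varrho^2)$ in the odd case and $s^2-r^2=\varrho^2(\sigma^2-\varrho^2)$ in the even case. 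Since $\cos(\theta/2)=\sqrt{\sigma^2-\varrho^2}/\sigma$, the irrational factor $\sqrt{\sigma^2-\varrho^2}$ cancels cleanly when the area condition $A=k\sqrt{s^2-r^2}$ is written out.

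Carrying out that cancellation reduces the area condition to $a^2=2k\sigma^2$ in the odd case and $a^2=k\sigma^2$ in the even case. Rationality of $a$ together with squarefreeness of $k$ then forces $k=2$ and $a=2\sigma$ in the odd case, and $k=1$ and $a=\sigma$ in the even case. Both outcomes are summarised uniformly by $a=k\sigma$, whereupon $c=2a\sin(\theta/2)=2k\varrho$ is immediate; uniqueness of the triangle for the squarefree $k$ so distinguished follows because $a^2$ is fixed by the area equation. The only slightly delicate step is the parity analysis used to put $(\sigma^2-2\varrho^2)/\sigma^2$ in lowest terms and to keep track of the factor of $2$ that does or does not appear inside $\sqrt{s^2-r^2}$; this is precisely what is responsible for the two admissible values $k\in\{1,2\}$, consistent with Thm 4.2. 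Everything else is routine manipulation.
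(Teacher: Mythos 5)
Your proposal is correct and follows essentially the same route as the paper: reduce to $c=2a\sin(\theta/2)$ for the equivalence, split on the parity of $\sigma$ to put $\cos\theta=(\sigma^2-2\varrho^2)/\sigma^2$ in lowest terms $r/s$, and match the area $a^2\sin(\theta/2)\cos(\theta/2)$ against $k\sqrt{s^2-r^2}$ to force $(k,a)=(2,2\sigma)$ in the odd case and $(1,\sigma)$ in the even case. The only (welcome) difference is one of direction: you solve the area condition $a^2=2k\sigma^2$ resp.\ $a^2=k\sigma^2$ under squarefreeness of $k$, which makes the uniqueness of the triangle explicit, whereas the paper verifies the specific choices $a=2\sigma$ resp.\ $a=\sigma$ and asserts uniqueness.
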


\begin{proof} First, if $a,a,c$ are the rational sides of an isosceles
rational $\theta$-triangle with a rational cosine $\cos(\theta)=r/s$, then
$c=2a\sin(\theta/2)$ so that $\sin(\theta/2)$ is also rational. Conversely, 
if $\sin(\theta/2)$ is rational, say $\sin(\theta/2)=\varrho/\sigma$ with 
$\varrho,\sigma\in\mathbb{N}$ coprime, then $\cos(\theta)=1-2\sin^2(\theta/2)$
is rational. If $\sigma$ is odd, then $\cos(\theta)=(\sigma^{2}-2\varrho^{2})/\sigma^{2}=r/s$
is a coprime representation with $r,s$ both odd. Let $a=2\sigma$ and $c=2a\sin(\theta/2)=
4\varrho$. Then the isosceles rational $\theta$-triangle with sides $a,a,c$ has the area 
$(a^2/2)\sin(\theta)=2\sigma^2\sqrt{1-\cos^{2}(\theta)} = 2\sigma^2\sqrt{(s^2-r^2)/s^2} = 
(2\sigma^2/s)\sqrt{s^2-r^2} = 2\sqrt{s^2-r^2}$, so that this triangle is the unique solution 
with $k=2$.\par\indent 
If $\sigma=2\tau$ is even, then $\cos(\theta)=(4\tau^{2}-2\varrho^{2})/(4\tau^{2}) = 
(2\tau^{2}-\varrho^{2})/(2\tau^{2})=r/s$ is a coprime representation with $s$ even and $r$ odd. 
Let $a=\sigma$ and $c=2a\sin(\theta/2)=2\varrho$. Then the isosceles rational $\theta$-triangle
with sides $a,a,c$ has the area $(a^2/2)\sin(\theta) = (\sigma^2/2)\sqrt{1-\cos^{2}(\theta)} = 
(\sigma^2/2)\sqrt{(s^2-r^2)/s^2} = \bigl(\sigma^2/(2s)\bigr)\sqrt{s^2 -r 2} = 
\bigl(\sigma^2/(4\tau^2)\bigr)\sqrt{s^2-r^2} = \sqrt{s^2-r^2}$, so that this triangle 
is the unique solution with $k=1$. $\square$.
\end{proof} 

\begin{rem} The equilateral triangle with all three sides of length $2$ plays a somewhat 
special role. First, it is the unique isosceles rational $\pi/3$-triangle with area $k\sqrt{3}$ 
and squarefree $k\in\mathbb{N}$; in fact, $k=1$. Furthermore, there is no other rational 
$\pi/3$-triangle with area $\sqrt{3}$, since the rank of the Mordell-Weil group 
$E_{\mathbb{Q}}(-1,3)$ is zero (cf. [O1] or [Y1]). For any squarefree $k>1$ there
are either no rational $\pi/3$-triangles with area $k\sqrt{3}$ at all, or else there are 
infinitely many such triangles. The only case in which there is a single solution is the case 
$k=1$ with the above-mentioned equilateral triangle. 
\end{rem} 

The possibility of freely moving between the concordant form problem and the generalized congruent number 
problem provides a way of translating solutions found for one of these problems to solutions of the 
other problem. For example, various interesting examples of concordant forms and of triplets of rational 
squares in arithmetic progressions can be obtained from the examples for rational $2\pi/3$-triangles found 
in [Ka]. Conversely, examples for concordant forms given in [O1] can be used to construct rational 
$\theta$-triangles.


\begin{thebibliography}{HD}

\normalsize
\baselineskip=17pt


\bibitem[Ca]{Cassels} J. W. S. Cassels,
\emph{Lectures on Elliptic Curves},
Cambridge University Press 1991.

\bibitem[Co1]{Connell} I. Connell, 
\emph{Elliptic Curve Handbook}, available on the internet via 
{\tt http://www.math.mcgill.ca/connell/}.

\bibitem[Co2]{Conrad} K. Conrad,
\emph{Arithmetic Progression of Four Squares},
arXiv:0909.1642v1, 2009.

\bibitem[E]{Euler} L. Euler, 
\emph{De binis formulis speciei xx+myy et xx+nyy inter se concordibus et discordibus}, 
Mem. Acad. Sci. St.-Petersbourg 1780 
(Opera Omnia: Ser. 1, Vol. 5, pp. 406--413).

\bibitem[F1]{Fujiwara} M. Fujiwara,
\emph{$\theta$-congruent numbers},
in: K. Gy\"o{}ry et al. (eds.), \emph{Number theory},   
de Gruyter, Berlin 1998, 235--241.

\bibitem[F2]{Fujiwara} M. Fujiwara,
\emph{Some properties of $\theta$-congruent numbers},
Natural Science Report, Ochanomizu Univeristy,   
vol. 52, no. 2, 2001.

\bibitem[GS]{GonzalesSteuding} E. Gonz\`a{}les-Jim\'e{}nez and J. Steuding,
\emph{Arithmetic progressions of four squares over quadratic fields},
Publ. Math. Debrecen 77 (1-2), 2010, 125--138.

\bibitem[GX]{GonzalesXarles} E. Gonz\`a{}les-Jim\'e{}nez and X. Xarles,
\emph{Five Squares in Arithmetic Progression over Quadratic Fields},
arXiv:0909.1663v3[math.NT].

\bibitem[H]{Husemoller} D. Husem\"o{}ller, \emph{Elliptic Curves}, 
Springer, New York 2004.

\bibitem[I]{Im} B.-H. Im, \emph{Concordant Numbers within Arithmetic Progressions 
and Elliptic Curves}, Proc. Amer. Math. Soc. 141 (3), 2013, 791--800.

\bibitem[JSDP]{JanfadaSalamiDujellaPeral} A. S. Janfada, S. Salami, A. Dujella, J. C. Peral, 
\emph{On the high rank $\pi/3$- and $2\pi/3$-congruent number elliptic curves},  
Rocky Mountain Journal of Mathematics (1), 2014.

\bibitem[Ka]{Kan} M. Kan,
\emph{$\theta$-congruent numbers and elliptic curves},  
Acta Arithmetica 94 (2), 2000, 153--160.

\bibitem[Ko]{Koblitz} N. Koblitz,
\emph{Introduction to Elliptic Curves and Modular Forms},  
Springer, New York/Berlin/Heidelberg 1993.

\bibitem[M]{Mazur} B. Mazur, 
\emph{Modular curves and the Eisenstein ideal}, 
Publications math\'e{}matiques de l'I.H.E.S 47 (2), 1977, 33--186.

\bibitem[O1]{Ono1} K. Ono, 
\emph{Euler's Concordant Forms}, Acta arithmetica LXXVIII (2), 1996, 101--123.   

\bibitem[O2]{Ono2} T. Ono, 
\emph{Variations on a Theme of Euler},  
Plenum Press, New York and London 1994.

\bibitem[ST]{ST} J. H. Silverman, J. Tate, \emph{Rational Points on 
Elliptic Curves}, Springer, New York 1992. 

\bibitem[TY]{Top} J. Top and N. Yui, 
\emph{Congruent number problems and their variants},  
Algorithmic Number Theory 44 (2008), 613--639.

\bibitem[T]{Tunnell} J. B. Tunnell,
\emph{A classical Diophantine problem and modular forms of weight 3/2},
Invent. Math. 72 (2), (1983), 323--334.

\bibitem[X]{Xarles} X. Xarles,
\emph{Squares in Arithmetic Progression over Number Fields},
Journal of Number Theory 132 (3), March 2012, 379--389.

\bibitem[Y1]{Yoshida1} S. Yoshida,
\emph{Some Variants of the Congruent Number Problem I},
Kyushu J. Math. 55 (2001), 387--404.

\bibitem[Y2]{Yoshida2} S. Yoshida,
\emph{Some Variants of the Congruent Number Problem II},
Kyushu J. Math. 56 (2002), 147--165.

\bibitem[Y3]{Yoshida3} S. Yoshida,
\emph{Some Variants of the Congruent Number Problem III},
Technical Report, Chiba University.
\end{thebibliography}
\end{document}